\newcommand{\comment}[1]{}
\newtheorem{theorem}{Theorem}[section]
\newtheorem{lemma}[theorem]{Lemma}
\newtheorem{corollary}[theorem]{Corollary}
\journal{Discrete Mathematics}
\begin{document}

\begin{frontmatter}

\title{Determinants of box products of paths}
\author{Daniel Pragel}
\address{Emory and Henry College\\ P.O. Box 947\\ Emory, VA 24327}

\begin{abstract}
Suppose that $G$ is the graph obtained by taking the box product of a path of length $n$ and a path of length $m.$ Let $\mathbf{M}$ be the adjacency matrix of $G.$ If $n = m,$ H.M. Rara showed in 1996 that $\det(\mathbf{M}) = 0.$ We extend this result to allow $n$ and $m$ to be any positive integers, and show that $$\det(\mathbf{M}) = \left\{\begin{array}{ll}0 & \hbox{if } \gcd(n+1,m+1) \neq 1,\\ (-1)^{nm/2} & \hbox{if } \gcd(n+1,m+1) = 1.\end{array}\right.$$

\end{abstract}

\begin{keyword} Graph Theory \sep Box Product \sep Cartesian Product \sep Adjacency Matrix \sep Path \sep Determinant

\end{keyword}

\end{frontmatter}

\section{Introduction}
Let $[n] = \{1,2,\dots,n\}.$ We define a \textit{graph} $G$ to be an ordered pair of sets $(V,E),$ where $V$ is any set and $E \subseteq \binom{V}{2}$; we refer to $V$ as the \textit{vertices} and $E$ as the \textit{edges} of $G.$ The \textit{adjacency matrix} of $G$ is denoted $\mathbf{A}(G)$ and is a matrix with rows and columns indexed by $V$ such that $$\mathbf{A}(G)_{i,j} = \left\{\begin{array}{l l} 1 & \hbox{if } \{i,j\} \in E,\\ 0 & \hbox{if } \{i,j\} \not\in E.\end{array}\right.$$  Let $\mathbf{I}_n$ be the $n \times n$ identity matrix and let $\mathbf{0} _n$ be the $n \times n$ matrix of all zeros. If $G$ has $n$ vertices, the \textit{characteristic polynomial} of $\mathbf{A}(G)$ is defined to be $q_G(x) = \det(\mathbf{A}(G) - x\mathbf{I}_n).$

Suppose $G_1$ and $G_2$ are graphs with vertex sets $V_1$ and $V_2,$ and edge sets $E_1$ and $E_2,$ respectively. The \textit{box product} of $G_1$ and $G_2,$ denoted $G_1 \Box G_2,$ is the graph with vertex set $V = V_1 \times V_2$ and such that, for $i_1,j_1 \in V_1$ and $i_2,j_2 \in V_2,$ $\{(i_1,i_2),(j_1,j_2)\}$ is an edge in $G_1 \Box G_2$ if and only if either $i_1 = j_1$ and $\{i_2,j_2\} \in E_2,$ or $i_2 = j_2$ and $\{i_1,j_1\} \in E_2.$ For an in-depth look at the box product (also referred to as the \textit{Cartesian product}) of graphs, see \cite{imrich}.

Let $G$ be a graph with vertex set $[n]$ and adjacency matrix $\mathbf{A},$ and let $H$ be a graph with vertex set $[m]$ and adjacency matrix $\mathbf{B}.$ Then, the vertices of $G \Box H$ can be labeled with the elements of $[nm],$ by relabeling the vertex $(i,j)$ as $(i-1)m + j.$ Under this labeling, the adjacency matrix $\mathbf{M}$ of $G \Box H$ can be written as an $n \times n$ block matrix $\mathbf{M} = [\mathbf{M}_{i,j}],$ where each $\mathbf{M}_{i,j}$ is $m \times m.$ Further, $$\mathbf{M}_{i,j} = \left\{\begin{array}{l l} \mathbf{B} & \hbox{if $i = j,$}\\ \mathbf{I}_m & \hbox{if $i \neq j$ and $i \sim j$ in $G$,}\\ \mathbf{0} _m & \hbox{if $i \neq j$ and $i \not\sim j$ in $G$.}\end{array}\right.$$ The $\mathbf{M}_{i,j}$ are all elements of the commutative subring $S$ of $\mathbb{R}^{m \times m}$ generated by $\mathbf{B}$ and $\mathbf{I}_m.$ Thus, if we denote the determinant over the ring $S$ by ${\det}_{S},$ it is not hard to see that ${\det}_S(\mathbf{M}) = q_G(-\mathbf{B}),$ so $${\det}(\mathbf{M}) = {\det}\left({\det}_S(\mathbf{M})\right) = {\det}(q_G(-\mathbf{B})).$$ We now consider the case when both $G$ and $H$ are \textit{paths}.
\section{Paths and Products of Paths}
The \textit{path with $n$ vertices,} denoted $P_n,$ is the graph with vertex set $V = [n]$ and edge set $E = \{(i,i+1): i \in [n-1]\}.$ Let $q_n(x)$ be the characteristic polynomial of $\mathbf{A}(P_n).$ In \cite{rara}, it was shown that $\det\left(\mathbf{A}\left(P_n \Box P_n\right)\right) = 0.$ We extend this result, and compute the value of $\det\left(\mathbf{A}\left(P_n \Box P_m\right)\right)$ for all positive integers $n$ and $m.$ We do this first by looking at $q_n(x).$ Note that, since $\mathbf{A}\left(P_n\right)$ is a tridiagonal matrix and has a very simple structure, many of the properties, including the roots, of $q_n(x)$ are explicitly known; for example, see \cite{fonseca1} and \cite{fonseca2}. We will take advantage of a few particularly nice properties of $q_n(x).$ First, we will use the following theorem from \cite{schwenk}. We add our own corollary below.

\begin{theorem}\label{schwenk} For $n \geq 2,$ $q_n(x) = -xq_{n-1}(x) - q_{n-2}(x).$
\end{theorem}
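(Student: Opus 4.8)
The plan is to prove the recurrence directly by Laplace (cofactor) expansion of the determinant defining $q_n(x)$. Recall that $q_n(x) = \det(\mathbf{A}(P_n) - x\mathbf{I}_n)$, and since $P_n$ is a path, $\mathbf{A}(P_n) - x\mathbf{I}_n$ is the $n \times n$ tridiagonal matrix with $-x$ along the main diagonal and $1$ on the two adjacent diagonals. The structural fact I would exploit is that deleting the last row and column of this matrix returns exactly $\mathbf{A}(P_{n-1}) - x\mathbf{I}_{n-1}$, and that the sparsity of the last row forces all but two cofactors to vanish.

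First I would expand $\det(\mathbf{A}(P_n) - x\mathbf{I}_n)$ along its final row. That row has only two nonzero entries: a $1$ in position $(n,n-1)$ and the diagonal entry $-x$ in position $(n,n)$. The cofactor attached to the $(n,n)$ entry is the determinant of the top-left $(n-1)\times(n-1)$ submatrix, which is precisely $\mathbf{A}(P_{n-1}) - x\mathbf{I}_{n-1}$; after accounting for the sign $(-1)^{2n} = +1$, this term contributes $-x\,q_{n-1}(x)$.

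Next I would treat the cofactor of the $(n,n-1)$ entry. Deleting row $n$ and column $n-1$ leaves an $(n-1)\times(n-1)$ matrix whose last column, inherited from column $n$ of the original, has a single nonzero entry, a $1$, in its bottom position. A second expansion along that column isolates the top-left $(n-2)\times(n-2)$ block, which is $\mathbf{A}(P_{n-2}) - x\mathbf{I}_{n-2}$. Tracking the signs, namely $(-1)^{n+(n-1)} = -1$ from the first expansion and $+1$ from the second, this term contributes $-q_{n-2}(x)$. Summing the two surviving contributions yields $q_n(x) = -x\,q_{n-1}(x) - q_{n-2}(x)$, as claimed, where for $n=2$ the empty $(n-2)\times(n-2)$ block has determinant $1 = q_0(x)$.

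The computation is essentially routine; the only point demanding care is the sign bookkeeping across the repeated cofactor expansions, since a parity error in $(-1)^{i+j}$ would flip the sign of the $q_{n-2}(x)$ term. To guard against this I would cross-check by verifying the base cases $n=2$ and $n=3$ explicitly against the direct determinant, and optionally by repeating the argument with an expansion along the last column (legitimate since the matrix is symmetric) to confirm the same outcome.
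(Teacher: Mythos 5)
Your proof is correct. Note, however, that the paper does not prove this statement at all: it is quoted as a known result from Schwenk's article (reference \cite{schwenk}) and stamped with \qed, so there is nothing internal to compare against. What you supply is the standard self-contained derivation: since $\mathbf{A}(P_n) - x\mathbf{I}_n$ is tridiagonal with $-x$ on the diagonal and $1$ on the off-diagonals, Laplace expansion along the last row leaves exactly two terms, the $(n,n)$ entry contributing $(-x)\cdot(+1)\cdot q_{n-1}(x)$ and the $(n,n-1)$ entry contributing $1\cdot(-1)^{2n-1}\cdot q_{n-2}(x)$ after the second expansion along the last column of the minor. Your sign bookkeeping is right at both stages, and your handling of the degenerate case $n=2$ via the empty-matrix convention $q_0(x)=1$ is consistent with how the paper later uses $q_0$ (e.g.\ in Lemma~\ref{shi} with $i=1$ and in the proof of Theorem~\ref{Irule}). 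The only thing your writeup buys beyond the paper is self-containedness; the only thing the citation buys is brevity. If you wanted to tighten the argument further, you could note that the general identity for tridiagonal determinants (continuant recurrence) gives $q_n = -x\,q_{n-1} - (1)(1)\,q_{n-2}$ in one step, but your two-stage expansion is perfectly sound as written.
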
 \qed

\begin{corollary}\label{parity} Let $n \geq 0.$ If $n$ is even, $q_n(x)$ is an even polynomial. If $n$ is odd, $q_n(x)$ is an odd polynomial.
\end{corollary}

\begin{proof} By inspection, corollary \ref{parity} is true for $n \leq 2.$ Assume it is true for all $n' < n$ for some $n > 2.$ This implies that $q_{n-1}(x)$ and $q_{n-2}(x)$ have opposite parities as polynomials, so $xq_{n-1}(x)$ and $q_{n-2}(x)$ have the same parity, so, using theorem \ref{schwenk}, we see that $q_n(x)$ and $q_{n-2}(x)$ have the same parity. The result follows.
\end{proof}

We will also use the following lemma; for a proof, see \cite{shi}.

\begin{lemma}\label{shi}For any $k \geq 1,$ if $i \in [k-1],$ then $$q_k(x) = q_i(x)q_{k - i}(x) - q_{i-1}(x)q_{k - i-1}(x).$$ Further, if $q_k(\lambda) = 0,$ then the following statements are true as well.

\begin{itemize}
\item[\textnormal{(a)}] If $0 \leq s \leq k,$ then $q_{k + s}(\lambda) = - q_{k - s}(\lambda).$
\item[\textnormal{(b)}] If $t \geq 1,$ then $q_{t(k + 1)-1}(\lambda) = 0.$
\end{itemize}
 \end{lemma}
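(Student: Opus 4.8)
The plan is to base everything on a transfer-matrix reformulation of the three-term recurrence in Theorem~\ref{schwenk}, together with the conventions $q_0(x) = 1$ and $q_{-1}(x) = 0$ (both forced by the recurrence and the definition of $q_1(x) = -x$). First I would introduce the matrix
$$T(x) = \begin{pmatrix} -x & -1 \\ 1 & 0 \end{pmatrix}$$
and prove by a one-line induction that
$$T(x)^n = \begin{pmatrix} q_n(x) & -q_{n-1}(x) \\ q_{n-1}(x) & -q_{n-2}(x) \end{pmatrix}$$
for all $n \geq 0$; the inductive step is exactly the identity $q_{n+1} = -x q_n - q_{n-1}$ read off from the top row of $T \cdot T^n$. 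This single formula packages the whole sequence.

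Second, the product formula drops out of the relation $T^k = T^i T^{k-i}$: comparing $(1,1)$-entries gives $q_k = q_i q_{k-i} - q_{i-1} q_{k-i-1}$ directly, and the hypothesis $i \in [k-1]$ guarantees that all four indices are nonnegative. (Alternatively one can prove this identity by fixing $i$ and inducting on $k-i$, letting the recurrence handle the step and the conventions $q_0 = 1$, $q_{-1} = 0$ handle the base cases.)

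Third, for part (a) I would fix a root $\lambda$ with $q_k(\lambda) = 0$ and observe that both sequences $a_s := q_{k+s}(\lambda)$ and $b_s := -q_{k-s}(\lambda)$ satisfy the same second-order recurrence $u_s = -\lambda u_{s-1} - u_{s-2}$ in $s$; for $b_s$ this uses that the recurrence is reversible. Since $a_0 = b_0 = q_k(\lambda) = 0$ and $a_1 = q_{k+1}(\lambda) = -q_{k-1}(\lambda) = b_1$ (again by the recurrence), the two sequences coincide for all $s$, which is exactly the claim on the range $0 \leq s \leq k$. Equivalently, this is a short induction on $s$ using only Theorem~\ref{schwenk}.

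Finally, part (b) is where the transfer matrix pays off, and I expect it to be the crux. Taking determinants in the power formula gives the polynomial identity $q_{n-1}^2 - q_n q_{n-2} = 1$ (since $\det T = 1$); evaluating at $n = k$, $x = \lambda$ and using $q_k(\lambda) = 0$ yields $q_{k-1}(\lambda)^2 = 1$, so $\varepsilon := q_{k-1}(\lambda) = \pm 1$. Substituting $q_k(\lambda) = 0$ and $q_{k-2}(\lambda) = -\lambda\varepsilon$ into $T(\lambda)^k$ and multiplying once more by $T(\lambda)$, I would compute that $T(\lambda)^{k+1} = -\varepsilon\,\mathbf{I}_2$ is a scalar matrix. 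Hence $T(\lambda)^{t(k+1)} = (-\varepsilon)^t\,\mathbf{I}_2$ for every $t \geq 1$, and reading off the $(2,1)$-entry via the power formula forces $q_{t(k+1)-1}(\lambda) = 0$. The only real subtlety is the bookkeeping that makes $T(\lambda)^{k+1}$ collapse to a scalar; once that is in hand, the periodicity of the zeros of $q_n(\lambda)$ is immediate.
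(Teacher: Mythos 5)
The paper does not actually prove this lemma; it cites Shi \cite{shi} and moves on, so there is no internal argument to compare against. Your proposal is a correct, self-contained replacement, and all the key computations check out: with $q_{-1}=0$, $q_0=1$, the top row of $T(x)\cdot T(x)^n$ reproduces Theorem \ref{schwenk}, the $(1,1)$-entry of $T^k=T^iT^{k-i}$ gives $q_iq_{k-i}-q_{i-1}q_{k-i-1}$ exactly (and $i\in[k-1]$ keeps every index at least $-1$), the two-sequence argument for (a) works since $a_0=b_0=0$ and $a_1=q_{k+1}(\lambda)=-q_{k-1}(\lambda)=b_1$ by the recurrence, and for (b) the determinant identity $q_{k-1}^2-q_kq_{k-2}=1$ forces $q_{k-1}(\lambda)=\varepsilon=\pm1$, whence $T(\lambda)^{k+1}=\left(\begin{smallmatrix}q_{k+1}(\lambda) & 0\\ 0 & -\varepsilon\end{smallmatrix}\right)=-\varepsilon\mathbf{I}_2$ (even more directly than via your substitution for $q_{k-2}(\lambda)$), so the $(2,1)$-entry of $T(\lambda)^{t(k+1)}=(-\varepsilon)^t\mathbf{I}_2$ gives $q_{t(k+1)-1}(\lambda)=0$. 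The only bookkeeping nit is that your power formula at $n=0$ implicitly requires $q_{-2}=-1$; either adopt that convention (it is what the recurrence at $n=0$ forces) or start the induction at $n=1$, since nothing below index $-1$ is ever needed in the applications. The transfer-matrix packaging is arguably cleaner than Shi's original treatment and has the virtue of making part (b), which is the step the paper leans on most heavily in Theorems \ref{recur} and \ref{Irule}, an immediate consequence of $T(\lambda)^{k+1}$ being scalar.
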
 \qed

We now are ready to prove the below theorem.

\begin{theorem}\label{recur} Suppose that $q_k(\lambda) = 0.$ Then, for all $a \geq 1$ and $0 \leq b \leq k,$
$q_{a(k+1) + b}(\lambda) = \left(q_{k+1}(\lambda)\right)^aq_b(\lambda).$
\end{theorem}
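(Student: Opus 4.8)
The plan is to reduce the entire statement to a single multiplicative identity and then induct on $a$. The key observation is that Lemma \ref{shi}, with the right choice of index, lets me peel off one factor of $q_{k+1}(\lambda)$ at a time. Specifically, I would apply the product formula $q_N(x) = q_i(x)q_{N-i}(x) - q_{i-1}(x)q_{N-i-1}(x)$ with $i = k+1$, which is legitimate precisely when $N \geq k+2$. Since the hypothesis $q_k(\lambda) = 0$ kills the coefficient $q_{i-1}(\lambda) = q_k(\lambda)$ in the second term, this collapses to the clean reduction identity $q_N(\lambda) = q_{k+1}(\lambda)\,q_{N-k-1}(\lambda)$, valid for every $N \geq k+2$. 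Establishing this identity is really the heart of the argument; everything afterward is bookkeeping.

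With the reduction identity in hand I would induct on $a$. For the base case $a = 1$ with $1 \leq b \leq k$, the index $N = (k+1)+b$ satisfies $N \geq k+2$, so one application of the identity gives $q_{(k+1)+b}(\lambda) = q_{k+1}(\lambda)q_b(\lambda)$, which is exactly $(q_{k+1}(\lambda))^1 q_b(\lambda)$. The one edge case is $a = 1$, $b = 0$, where $N = k+1$ falls just below the range of the identity; here I would simply note that $q_0(x) = 1$ (as forced by running the recurrence of Theorem \ref{schwenk} back one step, since $q_2 = -xq_1 - q_0$ with $q_1 = -x$ and $q_2 = x^2 - 1$), so the claimed formula reads $q_{k+1}(\lambda) = q_{k+1}(\lambda)\cdot 1$ and holds trivially.

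For the inductive step, assuming the formula for some $a \geq 1$ and all $0 \leq b \leq k$, I would set $N = (a+1)(k+1)+b$. Because $a+1 \geq 2$ we have $N \geq 2(k+1) \geq k+2$, so the reduction identity applies and yields $q_N(\lambda) = q_{k+1}(\lambda)\,q_{a(k+1)+b}(\lambda)$. The leftover index $a(k+1)+b$ is again of the required form, so the inductive hypothesis converts this into $q_{k+1}(\lambda)\cdot(q_{k+1}(\lambda))^a q_b(\lambda) = (q_{k+1}(\lambda))^{a+1}q_b(\lambda)$, closing the induction.

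The main obstacle is simply spotting the correct specialization $i = k+1$ in Lemma \ref{shi}; once the vanishing of $q_k(\lambda)$ eliminates the second term, the rest is a routine two-line induction whose only delicate point is the off-range base case $b = 0$. As a sanity check I would verify the boundary $b = k$, where the formula predicts $q_{a(k+1)+k}(\lambda) = 0$ since $q_k(\lambda) = 0$; this is consistent with part (b) of Lemma \ref{shi}, because $a(k+1)+k = (a+1)(k+1)-1$.
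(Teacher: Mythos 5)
Your proposal is correct and follows essentially the same route as the paper: both apply the product formula of Lemma \ref{shi} with the split at $k+1$ (equivalently, at $N-(k+1)$, which gives the same expansion), use $q_k(\lambda)=0$ to kill the second term, and then induct on $a$ with the $a=1,\ b=0$ case handled trivially. Your explicit isolation of the reduction identity $q_N(\lambda)=q_{k+1}(\lambda)q_{N-k-1}(\lambda)$ for $N\geq k+2$ is just a cleaner packaging of the same argument.
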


\begin{proof} Note that theorem \ref{recur} trivially holds when $a = 1$ and $b = 0.$ Suppose $1 \leq b \leq k.$ Applying lemma \ref{shi} shows that $$q_{k + 1 + b} = q_{k+1}(\lambda)q_{b}(\lambda) - q_{k}(\lambda)q_{b-1}(\lambda) = q_{k+1}(\lambda)q_b(\lambda),$$ and thus theorem \ref{recur} holds when $a = 1$ and $0 \leq b \leq k.$ Suppose it holds when $1\leq a < a'$ and $0 \leq b \leq k,$ for some $a' > 1.$ Suppose that $0 \leq b \leq k.$ Then, by lemma \ref{shi}, \begin{align*}q_{a'(k+1) + b}(\lambda) &= q_{(a'-1)(k+1) + b + k+1}(\lambda)\\
&= q_{(a'-1)(k+1) + b}(\lambda)q_{k+1}(\lambda) + q_{(a'-1)(k+1)+b-1}(\lambda)q_{k}(\lambda)\\
&= q_{(a'-1)(k+1) + b}(\lambda)q_{k+1}(\lambda) = \left(q_{k+1}(\l)\right)^{a'-1}q_b(\lambda)q_{k+1}(\lambda)\\ &= \left(q_{k+1}(\lambda)\right)^{a'}q_b(\lambda).\end{align*}
\end{proof}

Label the roots of $q_n(x)$ as $\lambda_{n,1},\lambda_{n,2}, \cdots,\lambda_{n,n}.$ Using our result from the previous section, $\det(\mathbf{A}(P_n \Box P_m)) = \det(q_n(-\mathbf{A}(P_m))).$ Corollary \ref{parity} implies that $q_n(-\mathbf{A}(P_m)) = (-1)^n q_n(\mathbf{A}(P_m)).$ Further, we can factor $q_n(x)$ as $$q_n(x) = \prod_{i = 1}^n \left(\lambda_{n,i} - x\right) = (-1)^n \prod_{i = 1}^n\left(x - \lambda_{n,i}\right).$$ Thus, \begin{align*}\det(\mathbf{A}(P_n \Box P_m)) &= \det(q_n(-\mathbf{A}(P_m))) = \det\left((-1)^n q_n(\mathbf{A}(P_m))\right)\\ &= \det\left((-1)^n(-1)^n \prod_{i = 1}^n\left(\mathbf{A}(P_m) - \lambda_{n,i}\mathbf{I}_m \right)\right)\\ &= \det\left(\prod_{i=1}^n (\mathbf{A}(P_m) - \lambda_{n,i}\mathbf{I}_m)\right)\\ &= \prod_{i = 1}^n \det(\mathbf{A}(P_m) - \lambda_{n,i}\mathbf{I}_m) = \prod_{i = 1}^n q_m(\lambda_{n,i}).\end{align*} Since, by definition, it is immediately evident that $P_n \Box P_m$ and $P_m \Box P_n$ are isomorphic as graphs, it follows that $\det(\mathbf{A}(P_n \Box P_m)) = \det(\mathbf{A}(P_m \Box P_n)).$ Thus, $$\prod_{i = 1}^n q_m(\lambda_{n,i}) = \prod_{i = 1}^m q_n(\lambda_{m,i}).$$ This leads to the following results.

\begin{theorem}\label{m=n+1} Suppose that $n \geq 1.$ Then, $\prod_{i = 1}^n q_{n+1}(\lambda_{n,i}) = (-1)^{n(n+1)/2}.$
\end{theorem}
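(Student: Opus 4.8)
The plan is to prove the statement by induction on $n$, using the three-term recurrence of Theorem \ref{schwenk} to replace $q_{n+1}$ by $q_{n-1}$ on the roots of $q_n$, and then invoking the symmetry relation $\prod_{i=1}^n q_m(\lambda_{n,i}) = \prod_{i=1}^m q_n(\lambda_{m,i})$ established just above to reduce the degree-$n$ product to the degree-$(n-1)$ product handled by the inductive hypothesis. For the base case $n = 1$, note that $q_1(x) = -x$ has its single root at $\lambda_{1,1} = 0$ and that $q_2(x) = x^2 - 1$, so $\prod_{i=1}^1 q_2(\lambda_{1,i}) = q_2(0) = -1 = (-1)^{1\cdot 2/2}$, as required.

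For the inductive step I would take $n \geq 2$ and assume the claim for $n - 1$. The key observation is that the recurrence collapses when $q_{n+1}$ is evaluated at a root of $q_n$: since $q_n(\lambda_{n,i}) = 0$, Theorem \ref{schwenk} gives
$$q_{n+1}(\lambda_{n,i}) = -\lambda_{n,i}\,q_n(\lambda_{n,i}) - q_{n-1}(\lambda_{n,i}) = -q_{n-1}(\lambda_{n,i}).$$
Taking the product over all $n$ roots then yields $\prod_{i=1}^n q_{n+1}(\lambda_{n,i}) = (-1)^n \prod_{i=1}^n q_{n-1}(\lambda_{n,i})$. I would next apply the symmetry relation with $m = n - 1$ (valid since $n - 1 \geq 1$) to rewrite $\prod_{i=1}^n q_{n-1}(\lambda_{n,i}) = \prod_{i=1}^{n-1} q_n(\lambda_{n-1,i})$, and recognize the right-hand side as exactly $\prod_{i=1}^{n-1} q_{(n-1)+1}(\lambda_{n-1,i})$, to which the inductive hypothesis applies, giving $(-1)^{(n-1)n/2}$. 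Combining, $\prod_{i=1}^n q_{n+1}(\lambda_{n,i}) = (-1)^n (-1)^{(n-1)n/2} = (-1)^{n(n+1)/2}$, which closes the induction.

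I expect no serious obstacle here; the only delicate points are index bookkeeping — confirming that dropping from $q_{n+1}$ to $q_{n-1}$ lines up with the symmetry relation at $m = n-1$, and that this relation requires $n - 1 \geq 1$ so the base case $n = 1$ must be treated separately. The one routine check is the exponent arithmetic $n + (n-1)n/2 = n(n+1)/2$, after which the sign $(-1)^n$ telescopes cleanly into the desired $(-1)^{n(n+1)/2}$.
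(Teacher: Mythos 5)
Your proof is correct and is essentially the paper's own argument: the same induction, the same collapse of the three-term recurrence at a root of $q_n$ to get $q_{n+1}(\lambda_{n,i}) = -q_{n-1}(\lambda_{n,i})$ (the paper cites Lemma \ref{shi}(a) where you cite Theorem \ref{schwenk} directly, but the identity is identical), and the same appeal to the symmetry relation to invoke the inductive hypothesis. The only cosmetic difference is that you step down from $n$ to $n-1$ while the paper steps up from $n$ to $n+1$.
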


\begin{proof} By inspection, theorem \ref{m=n+1} is true for $n = 1.$ Suppose it is true for some $n \geq 1.$ Then, by lemma \ref{shi}, $q_{n+2}(\lambda_{n+1,i}) = - q_n(\lambda_{n+1,i})$ for any $i \in [n+1],$ so \begin{align*}\prod_{i = 1}^{n+1} q_{n+2}(\lambda_{n+1,i}) &= \prod_{i=1}^{n+1} -q_n(\lambda_{n+1,i}) = (-1)^{n+1}\prod_{i=1}^{n+1} q_n(\lambda_{n+1,i})\\ &= (-1)^{n+1}\prod_{i=1}^n q_{n+1}(\lambda_{n,i}) = (-1)^{n+1}(-1)^{n(n+1)/2}\\ &= (-1)^{n+1+n(n+1)/2} = (-1)^{(n+1)(1+n/2)} = (-1)^{(n+1)(n+2)/2}.\end{align*}
\end{proof}

\begin{theorem}\label{Irule} Suppose $n, m \geq 1.$ Then, $$\prod_{i = 1}^n q_m (\lambda_{n,i}) = \left\{\begin{array}{l l} 0 & \hbox{if $\gcd(n+1,m+1) \neq 1$},\\ (-1)^{nm/2} & \hbox{if $\gcd(n+1,m+1) = 1.$}\end{array}\right.$$
\end{theorem}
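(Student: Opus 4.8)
The plan is to prove Theorem~\ref{Irule} by strong induction on $n+m$, running a Euclidean-algorithm-style reduction on the pair $(n+1,m+1)$. For brevity write $D(n,m)=\prod_{i=1}^n q_m(\lambda_{n,i})$, which (as shown above) equals $\det(\mathbf{A}(P_n\Box P_m))$; the graph isomorphism $P_n\Box P_m\cong P_m\Box P_n$ gives the symmetry $D(n,m)=D(m,n)$, so I may always assume $m\geq n$ and swap arguments whenever a reduction produces the larger index first. The base case is $m=n$: every factor $q_n(\lambda_{n,i})$ vanishes, so $D(n,n)=0$, in agreement with the theorem because $\gcd(n+1,n+1)=n+1>1$.

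For the inductive step, assume $m\geq n+1$ and write $m=a(n+1)+b$ with $a\geq 1$ and $0\leq b\leq n$. Since each $\lambda_{n,i}$ is a root of $q_n$, Theorem~\ref{recur} applies to every factor, and together with Theorem~\ref{m=n+1} it yields the central reduction
\[
D(n,m)=\left(\prod_{i=1}^n q_{n+1}(\lambda_{n,i})\right)^{a}D(n,b)=\left((-1)^{n(n+1)/2}\right)^{a}D(n,b).
\]
I then split on $b$. If $b=n$, then $D(n,b)=D(n,n)=0$, so $D(n,m)=0$, and $m+1=(a+1)(n+1)$ forces $\gcd(n+1,m+1)=n+1>1$. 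If $b=0$, then $D(n,0)=\prod_i q_0(\lambda_{n,i})=1$, so $D(n,m)=\left((-1)^{n(n+1)/2}\right)^{a}$, while $m+1\equiv 1\pmod{n+1}$ gives $\gcd(n+1,m+1)=1$. If $1\leq b\leq n-1$, I use $D(n,b)=D(b,n)$ and apply the inductive hypothesis, which is legitimate since $b+n<m+n$ and $b\geq 1$.

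What remains is bookkeeping that must align in two respects. First, the gcd dichotomy must descend correctly: from $m+1=a(n+1)+(b+1)$ with $1\leq b+1\leq n$ in the recursive case, one reads off $\gcd(n+1,m+1)=\gcd(n+1,b+1)$, which is exactly the Euclidean step, so ``coprime versus not'' is preserved. Second, the signs must match; in the coprime cases I must verify $\left((-1)^{n(n+1)/2}\right)^{a}(-1)^{nb/2}=(-1)^{nm/2}$, which follows from the integer identity $nm=a\,n(n+1)+nb$ and the fact that $n(n+1)/2$ is an integer. I expect the sign bookkeeping to be the main obstacle, and its crux is a parity observation: whenever $\gcd(n+1,m+1)=1$, at least one of $n,m$ is even (else $2$ would divide both $n+1$ and $m+1$), so $nm/2\in\mathbb{Z}$ and the exponents above are meaningful; the same remark applied to $(n,b)$ ensures $nb/2\in\mathbb{Z}$ in the recursive coprime case. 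Once these parity facts are recorded, the three cases close the induction.
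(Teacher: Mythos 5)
Your proposal is correct and takes essentially the same route as the paper's proof: the same reduction of each factor via Theorem~\ref{recur}, the same use of Theorem~\ref{m=n+1} for the sign, the same symmetry $\prod_{i=1}^n q_m(\lambda_{n,i})=\prod_{i=1}^m q_n(\lambda_{m,i})$ to flip the pair, and the same Euclidean descent on $\gcd(n+1,m+1)$ with identical sign bookkeeping. The only difference is cosmetic: you induct on $n+m$ whereas the paper inducts on the remainder of $m+1$ modulo $n+1$, and your cases $b=n$, $b=0$, $1\leq b\leq n-1$ correspond exactly to the paper's remainder $0$, $1$, and $r>1$ cases.
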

\begin{proof} Note that the above product is the determinant of $\mathbf{A}(P_n \Box P_m),$ which, as discussed above, is equal to the determinant of $\mathbf{A}(P_m \Box P_n).$ Thus, without loss of generality, we may assume that $n \leq m.$ We will induct on the remainder when $m + 1$ is divided by $n+1.$ Suppose this remainder is 0. Then, $\gcd(n+1,m+1) \neq 1,$ and $m+1 = k(n+1)$ for some $k\geq 1,$ so $m = k(n + 1) - 1.$ Thus, by lemma \ref{shi}, $q_m(\lambda_{n,i}) = 0$ for $i \in [n],$ and it follows that the product of these terms is zero. This verifies theorem \ref{Irule} for this case.

Suppose the remainder when $m+1$ is divided by $n+1$ is 1; we then have $m + 1 = k(n+1) + 1$ for some $k \geq 1.$ Note that this implies that $\gcd(n+1,m+1) = 1$ and $m = k(n+1),$ so, by theorem \ref{recur}, for $i \in [n],$ $$q_m(\lambda_{n,i}) = q_{k(n+1)}(\lambda_{n,i}) = (q_{n+1}(\lambda_{n,i}))^kq_0(\lambda_{n,i}) = (q_{n+1}(\lambda_{n,i}))^k.$$ Thus, $$\prod_{i=1}^nq_m(\lambda_{n,i}) = \prod_{i=1}^n(q_{n+1}(\lambda_{n,i}))^k = \left(\prod_{i=1}^n q_{n+1}(\lambda_{n,i})\right)^k = \left((-1)^{n(n+1)/2}\right)^k,$$ by theorem \ref{m=n+1}. Further, $$\left((-1)^{n(n+1)/2}\right)^k = (-1)^{nk(n+1)/2} = (-1)^{nm/2}.$$ Thus, theorem \ref{Irule} is true in this case.

Finally, suppose theorem \ref{Irule} is true whenever the remainder when $(m+1)$ is divided by $(n+1)$ is less than $r,$ for some $r > 1.$ Then, consider any $(m+1)$ and $(n+1)$ with $(m+1)$ having remainder $r$ when divided by $(n+1).$ It follows that there exists $k \geq 1$ such that $m + 1 = k(n+1) + r,$ implying that $m = k(n+1) + r-1.$ Then, once again applying theorem \ref{recur}, \begin{align*}\prod_{i=1}^nq_{m}(\lambda_{n,i}) &= \prod_{i = 1}^n (q_{n+1}(\lambda_{n,i}))^k q_{r-1}(\lambda_{n,i}) = \prod_{i=1}^n(q_{n+1}(\lambda_{n,i}))^k\prod_{i=1}^nq_{r-1}(\lambda_{n,i})\\
&= \left(\prod_{i=1}^nq_{n+1}(\lambda_{n,i})\right)^k \prod_{i=1}^nq_{r-1}(\lambda_{n,i}) = (-1)^{nk(n+1)/2}\prod_{i=1}^n q_{r-1}(\lambda_{n,i})\end{align*} by above. Note that $\gcd(n+1,m+1)= \gcd(r,n+1),$ by construction. Further, the remainder when $(n+1)$ is divided by $r$ is less than $r.$ Thus, by our induction hypothesis, if $\gcd(n+1,m+1) \neq 1,$ then $\gcd(r,n+1) \neq 1,$ so, \begin{align*}\prod_{i=1}^nq_{m}(\lambda_{n,i}) &= (-1)^{nk(n+1)/2}\prod_{i=1}^n q_{r-1}(\lambda_{n,i}) = 0.\end{align*} Otherwise, $\gcd(n+1,m+1) = 1,$ so $\gcd(r,n+1) = 1,$ implying that \begin{align*}\prod_{i=1}^nq_{m}(\lambda_{n,i}) &= (-1)^{nk(n+1)/2}\prod_{i=1}^n q_{r-1}(\lambda_{n,i}) = (-1)^{nk(n+1)/2}(-1)^{n(r-1)/2}\\ &= (-1)^{n(k(n+1) + r-1)/2} = (-1)^{nm/2}.\end{align*}
\end{proof}
The following corollary to theorem \ref{Irule} follows immediately.

\begin{corollary} Suppose $n$ and $m$ are positive integers. Then, $$\det\left(\mathbf{A}(P_n \Box P_m)\right) = \left\{\begin{array}{l l} 0 & \hbox{if $\gcd(n+1,m+1) \neq 1$},\\ (-1)^{nm/2} & \hbox{if $\gcd(n+1,m+1) = 1.$}\end{array}\right.$$
\end{corollary}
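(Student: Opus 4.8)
The plan is to recognize that this corollary is an immediate packaging of Theorem \ref{Irule}, so the only task is to identify the determinant $\det(\mathbf{A}(P_n \Box P_m))$ with the product of characteristic-polynomial values evaluated there. First I would recall the block reduction established in the introduction: writing the adjacency matrix of $P_n \Box P_m$ as an $n \times n$ block matrix whose blocks lie in the commutative ring $S$ generated by $\mathbf{A}(P_m)$ and $\mathbf{I}_m$, and passing from the ring determinant to the ordinary determinant, one obtains $\det(\mathbf{A}(P_n \Box P_m)) = \det(q_n(-\mathbf{A}(P_m)))$. This identity holds for all positive integers $n$ and $m$, with no restriction beyond $n, m \geq 1$.

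Next I would convert the right-hand side into a product over the roots of $q_n$. Using Corollary \ref{parity} to write $q_n(-\mathbf{A}(P_m)) = (-1)^n q_n(\mathbf{A}(P_m))$, and factoring $q_n(x) = (-1)^n \prod_{i=1}^n (x - \lambda_{n,i})$, the substitution $x = \mathbf{A}(P_m)$ introduces a second factor of $(-1)^n$. Since $(-1)^n(-1)^n = 1$, the scalar multiple is trivial, and the multiplicativity of the determinant together with the defining identity $\det(\mathbf{A}(P_m) - \lambda_{n,i}\mathbf{I}_m) = q_m(\lambda_{n,i})$ yields $\det(\mathbf{A}(P_n \Box P_m)) = \prod_{i=1}^n q_m(\lambda_{n,i})$. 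I would emphasize that keeping the scalar $(-1)^n(-1)^n$ inside the determinant (rather than extracting a power $m$ via $\det(cA) = c^m \det A$) is exactly what makes this step clean, since that scalar is precisely $1$.

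Finally I would invoke Theorem \ref{Irule}, which evaluates this very product and produces the stated case split according to whether $\gcd(n+1, m+1) = 1$, completing the proof. The genuinely difficult work lies not in this corollary but upstream, in Theorem \ref{Irule} itself, which rests on the recursion of Theorem \ref{recur} and the base-case product computed in Theorem \ref{m=n+1}; once those are available the corollary requires nothing further. The one point I would verify explicitly is that the determinant-to-product identity survives the small cases (for instance $n = 1$ or $m = 1$), but this is automatic, since both the block reduction and the factorization of $q_n$ are valid for every $n, m \geq 1$.
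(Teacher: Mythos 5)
Your proposal is correct and follows exactly the paper's route: the identity $\det(\mathbf{A}(P_n \Box P_m)) = \prod_{i=1}^n q_m(\lambda_{n,i})$ is precisely the computation the paper carries out in the text preceding Theorem \ref{m=n+1}, after which the corollary is read off from Theorem \ref{Irule}. Nothing further is needed.
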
 \qed

\end{document}